
\documentclass{amsart}

\usepackage{amsmath}
\usepackage{amssymb}
\usepackage{stmaryrd}

\usepackage{etex}

\usepackage{color}
\usepackage{pictexwd,dcpic}
\usepackage{mathtools}
\usepackage{upref}
\usepackage{xy}
 \usepackage{graphicx}
\usepackage[mathscr]{eucal}
\input xy
\xyoption{all}
\usepackage{hyperref}
\usepackage{url}
\hypersetup{colorlinks,%
citecolor=blue,%
filecolor=black,%
linkcolor=black,%
urlcolor=blue,%
}

\frenchspacing

\makeatletter
\def\section{\@startsection{section}{1}%
  \z@{.7\linespacing\@plus\linespacing}{.5\linespacing}%
  {\normalfont\bfseries\centering}}
\def\@secnumfont{\bfseries}
\makeatother

\def\sideremark#1{\ifvmode\leavevmode\fi\vadjust{\vbox to0pt{\vss%
  \hbox to 0pt{\hskip\hsize\hskip1em%
  \vbox{\hsize3cm\tiny\raggedright\pretolerance10000%
  \noindent #1\hfill}\hss}\vbox to8pt{\vfil}\vss}}}%

\let\ccdot\cdot
\def\cdot{\hbox to 2.5pt{\hss$\ccdot$\hss}}

\newcommand{\ga}{\gamma}

\renewcommand{\phi}{\varphi}

\def\>{\rightarrow}

\def\EE{\Bbb E}
\def\MM{\Bbb M}
\def\RR{\Bbb R}
\def\ZZ{\Bbb Z}

\newtheoremstyle{note}
{6pt}
{4pt}
{}
{}
{\itshape}
{:}
{.5em}
{}

\newtheorem*{prop*}{Proposition}
\newtheorem*{thm*}{Theorem}
\newtheorem*{lem*}{Lemma}
\newtheorem*{cor*}{Corollary}

\newtheorem{thm}{Theorem}[section]

\newenvironment{remark}[1][Remark.]{\begin{trivlist}
\item[\hskip \labelsep {\bfseries #1}]}{\end{trivlist}}

\setcounter{equation}{0}
\numberwithin{equation}{section}

\begin{document}
\title[The conformal pseudodistance]{The conformal pseudodistance\\ and null geodesic incompleteness\\ 
}
\author{Michael J. Markowitz}
\address{Information Security Corporation, 1011 Lake Street, Suite 425, Oak Park, IL 60301}
\email{markowitz@infoseccorp.com}
\date{\today}

\begin{abstract}

In this note we clarify the relationship between the null geodesic completeness of 
an Einstein Lorentz manifold and its conformal Kobayashi pseudodistance.
We show that an Einstein manifold has at
least one incomplete null geodesic if its pseudodistancfe is nontrivial. If its pseudodistance is
nondegenerate, all of its null geodesics must be incomplete. Thus an Einstein manifold $(M,g)$ 
has no complete null geodesic if there is a ``physical metric'' in the conformal class of $g$ 
satisfying the null convergence and null generic conditions.

\end{abstract}

\subjclass[2000]{53C50, 53A30, 53B30, 53C25, 53C22, 53C05, 53C80}
\maketitle

\setcounter{tocdepth}{2}

\medskip
\noindent{\bf Acknowledgments.} The perspective described herein was obtained during the first week of the ``Workshop on Cartan Connections, Geometry of Homogeneous Spaces, and Dynamics'' held at the International Erwin Schr\"odinger Institute (ESI) in Vienna (July 11-15, 2011). The author would like to thank the organizers, Andreas \v Cap, Charles Frances, and Karin Melnick, for inviting him to participate in the conference and providing a very stimulating environment. We also thank Charles, Thierry Barbot, Ben McKay, and Roger Schlafly for several interesting discussions on this and related topics.


\section{Introduction}\label{1}

In \cite{M1} we introduced on each Lorentz manifold $(M,g)$ an intrinsic pseudodistance $d_M^{[g]}$ depending only 
on the underlying conformal class $[g]$ of $g$. $d_M^{[g]}$ is called the \emph{conformal Kobayashi pseudodistance}
because its construction is modeled on that of the projective Kobayashi pseudodistance introduced in \cite{K1}.
For an extension of this construction to general parabolic geometries, see \cite{M3}.

The purpose of the present note is to make precise the relationship 
between the behavior of $d_M^{[g]}$ and the (affine) incompleteness of null geodesics when $g$ 
is an Einstein metric. In particular, we note the potential usefulness of $d_M^{[g]}$ in the study of 
spacetime singularities characterized by null geodesic incompleteness of an Einstein scale.

The following will be shown to be a simple consequence of the definition of $d_M^{[g]}$.

\begin{prop*}
If $(M,g)$ is a connected Einstein manifold and $d_M^{[g]}$ is nontrivial, then $g$ has at least one incomplete null geodesic.
If $d_M^{[g]}$ nondegenerate, then every null geodesic of $g$ is incomplete.
\end{prop*}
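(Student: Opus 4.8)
The plan is to establish the contrapositive of each assertion, reducing both to the behaviour of the model pseudometric under projective reparametrization of a single null geodesic. Recall from \cite{M1} that $d_M^{[g]}$ is assembled from finite chains of conformal discs: a disc is a null geodesic of the conformal class $[g]$ carried by its canonical, conformally invariant projective parametrization, regarded as a map $f\colon J\to M$ from the model interval $J=(-1,1)\subset\mathbb{RP}^1$, and $d_M^{[g]}(p,q)=\inf\sum_i\rho(a_i,b_i)$ is the infimum, over chains $f_i(a_i)=x_i$, $f_i(b_i)=x_{i+1}$ with $x_0=p$, $x_k=q$, of sums of Hilbert-metric lengths $\rho(a_i,b_i)$ in $J$. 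The one conformal-geometric ingredient I would recall (from \cite{M1}, or \cite{M3}) is that for an Einstein metric the Ricci tensor annihilates null directions, so along any null geodesic the $g$-affine parameter is already a projective parameter for the canonical projective structure on that geodesic. Consequently a \emph{complete} null geodesic $\gamma$ of the Einstein metric $g$, affinely parametrized, is precisely a conformal disc defined on all of $\RR=\mathbb{RP}^1\setminus\{\infty\}$.

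Next I would isolate the ``stretching'' estimate. Fix such a complete null geodesic and two of its points $p=\gamma(s_0)$, $q=\gamma(s_1)$; precompose $\gamma$ with the M\"obius map $\phi_N\colon J\to(-N,N)\subset\RR$ fixing $0$, which is again a conformal disc through $p$ and $q$. By projective invariance of the Hilbert metric, $\rho\big(\phi_N^{-1}(s_0),\phi_N^{-1}(s_1)\big)$ equals the Hilbert distance between $s_0$ and $s_1$ inside the interval $(-N,N)$, and the explicit cross-ratio formula shows this tends to $0$ as $N\to\infty$. Hence $d_M^{[g]}(p,q)=0$ whenever $p$ and $q$ lie on a common complete null geodesic. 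Since a nonconstant null geodesic contains distinct points (and a closed one contains arbitrarily close distinct points), this already yields the contrapositive of the second statement: if even one null geodesic of $g$ is complete then $d_M^{[g]}$ is degenerate, so nondegeneracy of $d_M^{[g]}$ forces every null geodesic to be incomplete.

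For the first statement, assume every null geodesic of $g$ is complete and fix $p,q\in M$. Because $M$ is connected, $p$ and $q$ are joined by a finite chain $p=x_0,x_1,\dots,x_k=q$ with each consecutive pair lying on a common null geodesic segment, as is needed for $d_M^{[g]}$ to be finite in the first place; I would cite \cite{M1} for this null-geodesic chain-connectivity. Each segment lies on a complete null geodesic, so by the previous paragraph it contributes a conformal disc of Hilbert length below $\varepsilon/k$; summing along the chain gives $d_M^{[g]}(p,q)<\varepsilon$. Letting $\varepsilon\to0$ and varying $p,q$ yields $d_M^{[g]}\equiv0$, i.e. $d_M^{[g]}$ is trivial; contrapositively, nontriviality of $d_M^{[g]}$ produces at least one incomplete null geodesic.

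The only step that is not pure Kobayashi-style bookkeeping is the identification, for Einstein $g$, of the metric-affine parameter of a null geodesic with a projective parameter of the conformally invariant projective structure on it; this is where ``Einstein'' enters and where I expect the (modest) work to lie. A minor point to handle cleanly is whether the admissible model discs are maps of proper subintervals of $\mathbb{RP}^1$ or may be restrictions of a map of all of $\RR$; in either case the argument only uses larger and larger proper subintervals, which is exactly what the maps $\phi_N$ supply.
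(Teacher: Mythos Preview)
Your proposal is correct and follows essentially the same route as the paper. The paper also first observes that for an Einstein metric $Ric(\gamma',\gamma')=0$ forces $\{p,s\}=0$, so affine parameters are projective; it then proves the same ``stretching'' lemma by the reparametrizations $\gamma_i(s)=\gamma(ibs)$ (your $\phi_N$ in slightly different clothing) to show $d_M^{[g]}$ vanishes along any complete null geodesic, and finishes both statements by the same contrapositive plus piecewise-null-geodesic connectivity argument you give.
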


We now recall the definitions of two positive energy conditions used in \cite{M1}:\footnote
{For a detailed discussion of these conditions, also see \cite{HE}, p. 95 and 101.}
the \emph{null convergence condition},
\begin{align*}
& Ric(X,X) \geq 0 \quad \text{for all null vectors } X, \tag{NCC}
\end{align*}
and the \emph{null generic condition},
\begin{align*}
& Ric(\gamma',\gamma') \neq 0 \text{ at some point along every inextendible null geodesic } \gamma. \tag{NGC}
\end{align*}

Our main result, a consequence of the above proposition and the results of \cite{M1}, may now be stated as follows:

\begin{thm*}
Suppose that $(M,g)$ is an Einstein manifold and that some metric $\tilde g$ in the conformal class of 
$g$ satisfies the NCC and NGC. Then $d_M^{[g]}$ is nondegenerate and every null geodesic of $g$ is 
incomplete. If $M$ is compact, $\tilde g$ must be null geodesically incomplete.
\end{thm*}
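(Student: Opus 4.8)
The strategy is to extract nondegeneracy of $d_M^{[g]}$ from the two energy conditions by quoting \cite{M1}, then to feed this into the Proposition above to conclude that all null geodesics of $g$ are incomplete, and finally, when $M$ is compact, to transfer incompleteness from $g$ to $\tilde g$ by a conformal reparametrization argument.

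First I would invoke the results of \cite{M1}: if some metric $\tilde g$ in the conformal class $[g]$ satisfies the NCC and the NGC, then $d_M^{[g]}$ is nondegenerate. This is the one genuinely substantial ingredient, and it is where the energy conditions do their work — the focusing they force along null geodesics of $\tilde g$ bounds below the ``cost'' of joining distinct points by chains of conformal immersions of the model, so that $d_M^{[g]}(p,q)>0$ for $p\neq q$. Granting this, since $(M,g)$ is Einstein and $d_M^{[g]}$ is nondegenerate, the second assertion of the Proposition applies and yields that every null geodesic of $g$ is incomplete. This proves the first two claims of the theorem.

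It remains to treat the compact case. Write $\tilde g=e^{2\phi}g$ with $\phi\in C^\infty(M)$; since $M$ is compact, $\phi$, and hence $e^{2\phi}$, is pinched between two positive constants. I would use the classical fact that $g$ and $\tilde g$ have the same null geodesics as point sets, and that if $\gamma$ is a $g$-null geodesic with $g$-affine parameter $t$, then a $\tilde g$-affine parameter $s$ along the same curve satisfies $ds/dt=c\,e^{2\phi\circ\gamma}$ for a constant $c>0$; the term involving $g(\gamma',\gamma')$ in the conformal change of the Levi-Civita connection drops out precisely because $\gamma'$ is null. Because $e^{2\phi}$ is bounded between positive constants, $t\mapsto s$ is an increasing bi-Lipschitz bijection between the two maximal affine-parameter intervals. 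Hence a maximal $\tilde g$-null geodesic, reparametrized by $g$-affine parameter, is a maximal $g$-null geodesic — an extension of one would conformally reparametrize to an extension of the other — so by the previous paragraph it is incomplete, so its $g$-affine domain is a bounded interval, so (by the bi-Lipschitz comparison) its $\tilde g$-affine domain is bounded too. Thus every null geodesic of $\tilde g$ is incomplete; in particular $\tilde g$ is null geodesically incomplete.

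I expect the real difficulty to lie entirely outside this note, in the nondegeneracy statement imported from \cite{M1}; given that input, the theorem is little more than the Proposition plus bookkeeping. Within the argument assembled here the one point demanding care is the final reparametrization — checking that ``maximal'' and ``bounded affine domain'' genuinely pass between the conformally related metrics — which is exactly where compactness of $M$ (equivalently, boundedness of the conformal factor) is essential: compactness alone does not force null geodesic incompleteness, so the conclusion really does depend on having first established nondegeneracy of $d_M^{[g]}$.
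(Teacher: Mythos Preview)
Your argument is correct and, for the first two assertions, matches the paper's proof exactly: invoke \cite{M1} to get nondegeneracy of $d_M^{[\tilde g]}=d_M^{[g]}$ from the NCC and NGC, then apply the Proposition to the Einstein metric $g$.

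For the compact case you diverge slightly. The paper simply cites S\'anchez \cite{S} for the fact that null geodesic completeness is a conformal invariant on compact manifolds, and then transfers incompleteness from $g$ to $\tilde g$. You instead supply a direct proof of that invariance via the explicit reparametrization $ds/dt=c\,e^{2\phi\circ\gamma}$ and the bi-Lipschitz bound coming from compactness. Your route is more self-contained and makes transparent exactly where compactness enters; the paper's route is shorter but leaves the mechanism inside the reference. Both are valid, and yours is essentially a sketch of the relevant part of \cite{S}.
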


\noindent
Turning this around, we have the equivalent formulation:

\begin{thm*}
Suppose that $(M,\tilde g)$ is a Lorentz manifold satisfying the NCC and NGC. 
Then $d_M^{[\tilde{g}]}$ is nondegenerate and any Einstein metric 
in the conformal class of $\tilde g$ has no complete null geodesic. Moreover, if $M$ is compact, 
$\tilde{g}$ must be null geodesically incomplete.
\end{thm*}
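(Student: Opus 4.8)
The plan is to deduce this \emph{equivalent formulation} from the preceding Theorem and Proposition together with the basic properties of $d_M^{[g]}$ established in \cite{M1}; essentially the only real work is bookkeeping through the conformal invariance of the pseudodistance, plus isolating which parts genuinely require an Einstein representative to exist.

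First I would recall from \cite{M1} that $d_M^{[g]}$ is a conformal invariant: it depends only on the conformal class $[g]$, so that $d_M^{[\tilde g]} = d_M^{[g]}$ for \emph{every} metric $g$ with $[g]=[\tilde g]$, whether or not such a $g$ is Einstein. Next I would invoke the result of \cite{M1} that the NCC and NGC together force $d_M^{[\tilde g]}$ to be nondegenerate; this is the step that supplies nondegeneracy \emph{unconditionally}, i.e. without assuming the existence of an Einstein metric in the class, and it is therefore the substantive input behind the first assertion. Combining these two facts: if $g$ is any Einstein metric with $[g]=[\tilde g]$, then $d_M^{[g]} = d_M^{[\tilde g]}$ is nondegenerate, and applying the Proposition to each connected component of $M$ (on which the Einstein condition still holds) yields that every null geodesic of $g$ is incomplete, so $g$ has no complete null geodesic. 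This is exactly the content of the main Theorem read with the roles of $g$ and $\tilde g$ interchanged; if no Einstein metric lies in $[\tilde g]$ the assertion is vacuous.

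For the final clause, suppose $M$ is compact. If an Einstein metric $g$ with $[g]=[\tilde g]$ exists, the main Theorem already gives that $\tilde g$ is null geodesically incomplete; but the equivalent formulation makes no such existence hypothesis, and the point is that this conclusion should follow purely from the NCC, the NGC, and compactness. I would therefore extract it directly from \cite{M1}: the nondegeneracy of $d_M^{[\tilde g]}$ obtained above, combined with compactness and the fact that $\tilde g$ itself satisfies the NCC, forces $\tilde g$ to be null geodesically incomplete — this is the compact case of the mechanism already used to prove the main Theorem, in which the auxiliary Einstein metric served only to transfer nondegeneracy from the class to a specific representative.

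I expect the main obstacle to be precisely this last point: ensuring that the compactness argument of \cite{M1} is invoked at the right level of generality, namely for a metric satisfying the NCC whose conformal pseudodistance is nondegenerate, rather than only for an Einstein metric in the class. Everything else is a formal consequence of the conformal invariance of $d_M^{[g]}$ and of the already-established Proposition and Theorem.
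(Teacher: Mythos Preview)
Your treatment of the first two assertions --- nondegeneracy of $d_M^{[\tilde g]}$ via Theorem~\ref{thm:1}(f), conformal invariance $d_M^{[g]}=d_M^{[\tilde g]}$, and then the Proposition applied to any Einstein $g\in[\tilde g]$ --- is exactly the paper's argument.

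The compact clause is where you diverge, and there is a genuine gap. The paper does \emph{not} extract this from \cite{M1}. Instead it invokes a result of S\'anchez \cite{S}: for a \emph{compact} manifold, null geodesic completeness is a property of the conformal class. Since the Einstein metric $g$ has already been shown to be null geodesically incomplete, and $\tilde g\in[g]$, S\'anchez's result forces $\tilde g$ to be incomplete as well. In particular, the paper's argument for the compact clause \emph{does} use the Einstein representative; the two formulations are ``equivalent'' only in the sense that the author is restating the same theorem, not asserting a strictly stronger statement in the second version. Your attempt to bypass the Einstein hypothesis by appealing to ``nondegeneracy $+$ compactness $+$ NCC $\Rightarrow$ incompleteness'' from \cite{M1} is not supported: nothing in Theorem~\ref{thm:1} (which summarizes the relevant content of \cite{M1}) yields such an implication, and item~(e) goes in the opposite Ricci-sign direction. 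You correctly flagged this as the likely obstacle; the resolution is simply to cite \cite{S} and use the Einstein metric as the incomplete representative in the class.
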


In section 2, we recall the definition of $d_M^{[g]}$ and review its most important properties.
An important example illustrating the theorem is discussed in section 3. 
The proofs of these results appear in section 4.

\medskip
\begin{remark}[Remarks.]
Given a conformally Einstein manifold for which $d_M^{[g]}$ is nondegenerate, we can only
show that extreme null geodesic incompleteness manifests itself in every Einstein scale.
As far as we know, the `physical metric' ($\tilde g$ in both statements of the theorem) may admit 
a complete null geodesic, though this is certainly not the case for Einstein--de Sitter space, 
an important example discussed below. In this regard, we should also mention the theorem
of Beem (\cite{BE}) to the effect that any distinguishing, stongly causal, stably causal, or
globally hyperbolic Lorentz manifold has a null (and timelike) geodesically complete metric
in its conformal class.

A conformally Einstein metric can be characterized as a normal conformal
parabolic geometry whose standard tractor bundle admits a (suitably generic) parallel 
section (see \cite{GN}).
According to \cite{KM}, 
a null geodesically complete Einstein metric (of dimension $n\geq 3$) is the only Einstein metric in 
its conformal class (up to constant rescalings). 
In light of the above theorem and the present results, it would be interesting to know if an Einstein metric 
for which $d_M^{[g]}$ is nondegenerate can admit a nontrivial conformal Einstein rescaling.
\end{remark}

\medskip
\section{The conformal Kobayashi pseudodistance}\label{2}

A \emph{pseudodistance} (or `\emph{pseudometric}') on a set $M$ is a function $d: M \times M \to [0,\infty)$ satisfying:
\begin{align*}
d(x,x) &= 0\\
d(x,y) &= d(y,x), \quad \text{and}\\
d(x,y) &\leq d(x,z) + d(z,y),
\end{align*}
for all $x,y,z \in M$.
$d$ is said to be \emph{nondegenerate} if it is a true distance: $d(x,y) = 0$ implies $x=y$, for all $x,y \in M$.

We now show how to define a conformally invariant pseudodistance
on a connected pseudo-Riemannian manifold $(M,g)$ (of indefnite but otherwise arbitrary signature) of
dimension $n\geq2$, using its class of (projectively parametrized) null geodesics. 
The role of a `standard measuring rod' in our construction will be played by the interval
$I = (-1,1) \subset \RR$ endowed with its \emph{Poincar\'e metric}
\begin{align*}
ds_I^2 &= {4du^2 \over (1-u^2)^2}
\end{align*}
and associated distance function
\begin{align*}
\rho_I(u_1,u_2) = \left|\,log\, {(1+u_1)(1-u_2) \over {(1-u_1)(1+u_2)}}\,\right|.
\end{align*}
The real projective analog of the Schwarz lemma says that general projective
transformations of $I$ are nonexpansive with respect to $\rho$, while those
which are isometries at a single point must be automorphisms; see \cite{K1}.

First recall that a \emph{projective parameter} along an affinely parametrized null geodesic 
$\gamma(s)$ of $g$ may be defined as a solution of the differential equation
\begin{align}\label{eq:sde}
\{p,s\} \,&=\, - \frac{2}{n-2}\, Ric\big(\gamma'(s),\gamma'(s)\big),
\end{align}
where primes denote derivatives with repect to the affine parameter $s$, 
$Ric$ is the Ricci tensor of $g$, and $\{p,s\}$ is the \emph{Schwarzian derivative}
\begin{align*}
\{p,s\} \,&:=\, \frac{p'''}{p'} - \frac{3}{2} \bigg(\frac{p''}{p'}\bigg)^2.
\end{align*}
Projective parameters are well-defined up to linear fractional transformations, \emph{i.e.},
reparametrizations of the form
$t \mapsto (at+b)/(ct+d)$ with $ad - bc \not= 0$. It is well-known that the class of null 
geodesics of $(M,g)$ together with these distinguished parameters is invariant under
conformal rescalings of $g$. (See \cite{M1} for details.)

Now, given $x,y \in M$, we define a \emph{Kobayashi path} from $x$ to $y$ 
to be a collection of points $x\!=\!x_0, x_1, \ldots, x_k\!=\!y \in M$, pairs of points
$a_1, b_1, \ldots, a_k, b_k \in I$, and projectively parametrized null geodesics,
{$\ga_1,\ldots,\ga_k\colon\! I \to M$} 
such that
\begin{align*}
\ga_i(a_i) &= x_{i-1} \text{ and } \ga_i(b_i) = x_i \text{ for } i = 1,\ldots,k.
\end{align*}

\noindent
Denoting such a path by $\alpha = \{a_i, b_i, \ga_i\}_{i=1}^k$, we define its  
\emph{length} to be
\begin{align*}
L(\alpha) = \sum_{i=1}^k \rho_I(a_i,b_i).
\end{align*}

\begin{figure}[h]
\includegraphics[width=4.8in, clip=true, trim=0 470 0 0]{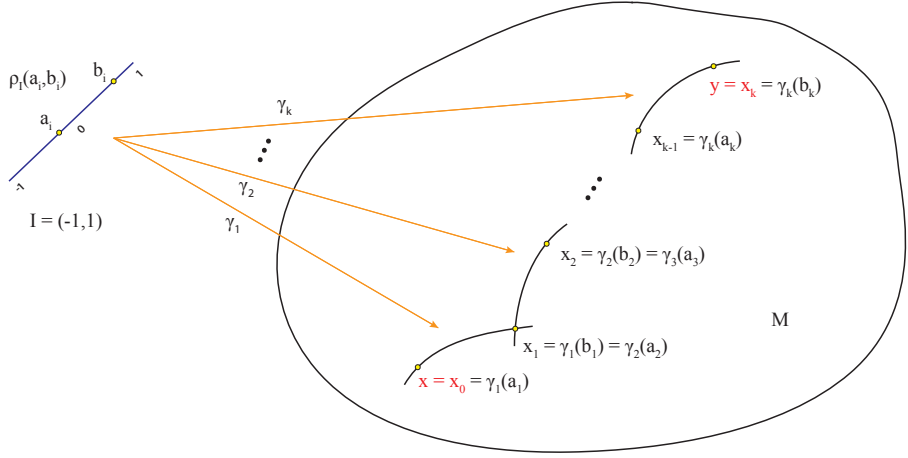}
\caption{A Kobayashi path from $x$ to $y$.}
\end{figure}
\vspace{20pt}

Finally, we define the \emph{conformal pseudodistance} by
\begin{align*}
d^{[g]}_M(x,y) = \inf_\alpha \,L(\alpha),
\end{align*}
where the infimum is taken over all Kobayashi paths $\alpha$ joining $x$ to $y$ in $M$. 
(Note that any two points of a connected Lorentz manifold $(M,g)$ may be joined by
a piecewise smooth null geodesic $\gamma$ and that such a $\gamma$ may easily be turned
into a Kobayashi path. Hence $d_M(x,y)$ is well-defined and 
non-negative on $M$.)

The following theorem summarizes the basic properties of $d^{[g]}_M$.

\begin{thm}[Markowitz \cite{M1}]\label{thm:1}
Suppose that $(M,g)$ is a connected Lorentz manifold.

(a) $d^{[g]}_M$ is a pseudodistance on $M$ that depends only on the conformal class $[g]$ of $g$.

(b) If $\gamma \colon\! I \to M$ is a projectively parametrized null geodesic, then 
\[
\rho_I(p,q) \geq d^{[g]}_M(\gamma(p),\gamma(q)) \quad \text{ for all } p,q \in I.
\]

(c) $d^{[g]}_M$ is the largest pseudodistance on $M$ with respect to property (b): if $\delta_M$ is any
pseudodistance on $M$ such that $\rho_I(p,q) \geq \delta_M(\gamma(p),\gamma(q))$ for all 
projectively parametrized null geodesics $\gamma \colon\! I \to M$ and all $p,q \in I$, then 
\[
\delta_M(x,y) \leq d^{[g]}_M(x,y) \quad \text{ for all } x,y \in M.
\]

(d) Each conformal automorphism $\phi \colon\! M \to M$ of $(M,g)$ is an isometry
with respect to $d^{[g]}_M$:
\[
d^{[g]}_M(\phi(x),\phi(y)) = d^{[g]}_M(x,y) \quad \text{ for all } x,y \in M.
\]

(e) If $(M,g)$ is a null geodescially complete Lorentz manifold and $Ric(X,X) \leq 0$ for all null vectors
$X$, then $d^{[g]}_M \equiv 0$.

(f) If $(M,g)$ is a Lorentz manifold satisfying the NCC and NGC, then $d^{[g]}_M$ is nondegenerate.
\end{thm}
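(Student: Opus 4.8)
The plan is to follow the template by which Kobayashi builds invariant pseudodistances, separating the formal bookkeeping (parts (a)--(d)) from the two substantive statements (e) and (f). For (a): non-negativity and finiteness are the parenthetical observation in the text (any two points of a connected Lorentz manifold are joined by a piecewise smooth null geodesic, which is readily turned into a Kobayashi path); $d^{[g]}_M(x,x)=0$ follows by taking a single link $\{a_1,b_1,\ga_1\}$ with $a_1=b_1$ on any projectively parametrized null geodesic through $x$; symmetry holds because $u\mapsto-u$ is an automorphism of $I$ preserving $\rho_I$, so reversing each link turns a Kobayashi path from $x$ to $y$ into one from $y$ to $x$ of the same length; the triangle inequality follows by concatenating Kobayashi paths; and conformal invariance is precisely the recalled fact that the class of null geodesics together with their projective parametrizations, characterized by \eqref{eq:sde}, depends only on $[g]$. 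For (b), the geodesic $\ga\colon I\to M$ is itself a one-link Kobayashi path from $\ga(p)$ to $\ga(q)$ of length $\rho_I(p,q)$. For (c), if $\delta_M$ has the stated property then for any Kobayashi path $\alpha=\{a_i,b_i,\ga_i\}_{i=1}^k$ from $x$ to $y$ the triangle inequality gives $\delta_M(x,y)\le\sum_i\delta_M(\ga_i(a_i),\ga_i(b_i))\le\sum_i\rho_I(a_i,b_i)=L(\alpha)$, and one takes the infimum. For (d), both $\phi$ and $\phi^{-1}$ are conformal automorphisms, so each carries projectively parametrized null geodesics, hence Kobayashi paths, to Kobayashi paths of equal length, yielding inequalities in both directions.

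The heart of the matter is the interplay between the sign of $Ric$ on null vectors and the behaviour of a projective parameter $p(s)$ along a (maximally extended) null geodesic $\ga$. Writing $Q=-\tfrac{2}{n-2}Ric(\ga',\ga')$, equation \eqref{eq:sde} reads $\{p,s\}=Q$, so $p=u_1/u_2$ with $u_1,u_2$ independent solutions of the linear equation $u''=-\tfrac12 Q\,u$; the sign of $Q$ therefore decides whether this equation is oscillatory. For (e), $Ric(X,X)\le0$ on null $X$ gives $Q\ge0$, the equation is oscillatory, and along a null-complete geodesic a solution $u_2$ has zeros accumulating in both directions; between two consecutive zeros the monotone function $p$ sweeps out all of $\RR$. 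Hence, for $x$ and $y$ lying on that geodesic between two such zeros, precomposing $\ga$ with a M\"obius map $I\to(c,d)\subset\RR$ produces a single link realizing $d^{[g]}_M(x,y)\le\rho_{(c,d)}\big(p(x),p(y)\big)$, and the right-hand side tends to $0$ as $(c,d)\uparrow\RR$; routing through points close to the zeros and using the triangle inequality extends this to all pairs on the geodesic, and then connectedness gives $d^{[g]}_M\equiv0$. The one delicate point is the non-oscillatory borderline case, which forces $Ric(\ga',\ga')$ to decay and is handled by the same interval-enlarging device once one checks that the range of $p$ is still unbounded.

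Part (f) is the deepest statement and is where I expect the real difficulty. Here the NCC gives $Q\le0$, so $u''=-\tfrac12 Q\,u$ is non-oscillatory; normalizing a solution $u_2>0$, the NGC (so $Q<0$ somewhere along every inextendible null geodesic) forces $u_2$ to grow at least linearly in both directions, whence $p$ has finite limits and its range is a \emph{proper bounded} interval --- a projectively parametrized null geodesic can therefore cover only a bounded amount of projective parameter near a point where $Ric\ne0$. To turn this into nondegeneracy of $d^{[g]}_M$ I would use the maximality property (c): it suffices to construct \emph{one} nondegenerate pseudodistance $\delta_M$ on $M$ satisfying $\rho_I(p,q)\ge\delta_M(\ga(p),\ga(q))$ for every projectively parametrized null geodesic $\ga\colon I\to M$. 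The local building block is a Sturm-type comparison estimate for $u''=-\tfrac12 Q\,u$ that bounds, uniformly in a neighborhood of a point with $Ric\ne0$, the $\rho_I$-length any such $\ga$ can spend there; the remaining work is to patch these local estimates into a genuine pseudodistance, which requires showing the family of projectively parametrized null geodesics through a point is suitably bounded (no ``Brody curve'' degeneracy) --- a normal-families/compactness argument that is the analogue in this setting of Brody's lemma, and the step I expect to be the main obstacle.
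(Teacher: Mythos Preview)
The paper does not prove Theorem~\ref{thm:1}; it is quoted from \cite{M1} as a summary of known results, with no argument supplied, so there is no proof here to set yours against. The only related fragment in the present paper is the Lemma in Section~\ref{4}, which is exactly the Einstein special case of part~(e): when $g$ is Einstein one has $Ric(\ga',\ga')\equiv 0$ along every null geodesic, so \eqref{eq:sde} gives $\{p,s\}=0$ and affine parameters are already projective; the rescalings $\ga_i(s)=\ga(ibs)$ then furnish one-link Kobayashi paths from $x=\ga(0)$ to $y=\ga(b)$ of length $\rho_I(0,1/i)\to 0$.

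Your sketch is consistent with this. The bookkeeping for (a)--(d) is the standard Kobayashi template and is correct. Your argument for (e) specializes to the paper's Lemma when $Q\equiv 0$; for the general case $Q\ge 0$ your oscillation picture is slightly overstated (zeros need not accumulate), but the dichotomy is actually cleaner than you indicate: a positive solution of $u''+\tfrac12Qu=0$ on all of $\RR$ is concave, hence constant, forcing $Q\equiv 0$, so outside the Einstein-type case every solution vanishes somewhere and the interval-enlarging device applies between zeros. Your outline for (f) --- local Sturm-comparison bounds on the projective-parameter range near points where $Ric\ne 0$, assembled into a genuine pseudodistance via the maximality property~(c) --- is indeed the strategy of \cite{M1}, and you have correctly identified the compactness/normal-families step as the substantive obstacle. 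Since the paper itself defers all of this to \cite{M1}, there is no discrepancy to report.
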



\medskip
\section{An illustrative example}\label{3}

In this section we illustrate our theorem using the simple cosmological model
introduced by Einstein and de Sitter in 1932 (and possibly earlier by Friedman), 
referring the reader to \cite{SW} for all necessary 
background information. 

\emph{Einstein--de Sitter space}, $(\EE,\tilde{g})$, is defined to be
$\EE = \RR^3 \times (0,\infty)$ with the Lorentzian warped product metric $\tilde{g} = t^{4/3} \sum_{i=1}^3 dx^i \otimes dx^i -dt \otimes dt $.
$\EE$ is connected, oriented by $dx^1 \wedge dx^2 \wedge dx^3 \wedge dt$,
and time oriented by $\partial_t$. 
The isometry group of $(\EE,\tilde{g})$ may be identified with the Euclidean group $E(3) = O(3) \ltimes \RR^3$
acting in the obvious way on the first three coordinates.
The Ricci tensor of $\tilde{g}$ is given by
\begin{align}\label{Ric}
Ric = (2/3)t^{-2/3} \sum_{i=1}^3  dx^i \otimes dx^i + (2/3) t^{-2} dt \otimes dt,
\end{align}
and its scalar curvature by $R = {{4}\over{3}}t^{-2}$. From \eqref{Ric} we see
that $Ric(X,X) > 0$ for all null vectors $X$, so that $(\EE,\tilde{g})$ 
satisfies the NCC and the NCG. The nondegenerate conformal Kobayashi distance 
$d_\EE^{[\tilde{g}]}$ was computed explicitly in \cite{M2}.

Clearly $\tilde{g}$ is not an Einstein metric on $\EE$. In fact, $\tilde{g}$ satisfies the Einstein field equations
with the stress-energy tensor of a dust: $\hat{T} = R \,\partial_t \otimes \partial_t$.
Since $R \to \infty$
as `cosmological time' $t = 2/\sqrt{3R} \to 0^+$, $(\EE,\tilde{g})$ is maximal in the sense
that it cannot be extended through the big bang singularity represented by the `$t=0$ part' of its causal boundary.

Consider the `standard photon' $\tilde{\lambda} \colon (0,\infty) \to \EE$ given by
$\tilde{\lambda}(s) = (0,0,3s^{1/5},s^{3/5})$. $\tilde{\lambda}(s)$ is an affinely parametrized, 
inextendible null geodesic in $\EE$ to which every inextendible null geodesics may be mapped
by an isometry. Thus $(\EE,\tilde{g})$ itself admits no complete null geodesic.

To see that $(\EE,\tilde{g})$ is conformally Einstein, let $\MM$ denote Minkowski space
and consider the mapping $\phi \colon \EE \to \MM$ given by 
$$\phi(x^1,x^2,x^3,t) = (x^1,x^2,x^3,3t^{1/3}).$$
$\phi$ is a conformal diffeomorphism of
$\EE$ onto the Minkowski upper half-space $\MM^{t>0}$ with its flat `vacuum' metric 
$g = \sum_{i=1}^3 dx^i \otimes dx^i -dt \otimes dt$, since $\phi^*(g) = (t^{-4/3}) \,\tilde{g}$.

Note that our standard
photon is mapped to $\phi \circ \tilde{\lambda}(s) = (0,0,3s^{1/5},3s^{1/5})$, which 
is inextendible on $(0,\infty)$ and has the (incomplete) affine reparametrization 
$\lambda(s) = (0,0,s,s)$. 
Thus the conclusion of the 
theorem clearly holds for this particular Einstein rescaling of $\tilde{g}$.


\medskip
\section{Proofs}\label{4}

First note that, if $g$ is an Einstein metric, $Ric(\gamma'(s),\gamma'(s)) = c \;g(\gamma'(s),\gamma'(s))$ vanishes identically along each null geodesic $\gamma$. Then from \eqref{eq:sde} we obtain $\{p,s\}=0$, which shows 
that any affine parameter along $\gamma$ is also a projective parameter. 

\begin{lem*}
If $(M,g)$ is Einstein, 
$d_M^{[g]}(x,y)$ vanishes identically along 
each complete null geodesic $\gamma(s) \colon \RR \to M$ of $g$.
\end{lem*}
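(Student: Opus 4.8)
The plan is to deduce the lemma from parts (a)--(b) of Theorem~\ref{thm:1}, together with the observation just made that when $g$ is Einstein every affine parameter along a null geodesic is simultaneously a projective parameter. Fix a complete null geodesic $\gamma\colon\RR\to M$ and two parameter values; by symmetry and the pseudodistance axioms we may assume they are $s_1<s_2$, and we set $x=\gamma(s_1)$, $y=\gamma(s_2)$. We must show $d_M^{[g]}(x,y)=0$.

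The only real obstacle is a matter of domains: property (b) applies to projectively parametrized null geodesics defined on the fixed interval $I=(-1,1)$, whereas our $\gamma$ is projectively parametrized on all of $\RR$, and no linear fractional transformation carries $(-1,1)$ onto $\RR$. I would circumvent this by exhausting $\RR$ by finite intervals. For each $N>0$ let $\phi_N\colon I\to(s_1-N,\,s_2+N)$ be the orientation-preserving affine bijection. Since affine maps are linear fractional, and since projective parameters are well-defined up to linear fractional reparametrization, $\gamma_N:=\gamma\circ\phi_N\colon I\to M$ is again a projectively parametrized null geodesic. Writing $u_i^N=\phi_N^{-1}(s_i)\in I$, so that $\gamma_N(u_i^N)=\gamma(s_i)$, Theorem~\ref{thm:1}(b) yields
\[
d_M^{[g]}(x,y)=d_M^{[g]}\bigl(\gamma_N(u_1^N),\gamma_N(u_2^N)\bigr)\le \rho_I\bigl(u_1^N,u_2^N\bigr).
\]

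Now I would let $N\to\infty$. A direct computation gives $u_i^N=(2s_i-s_1-s_2)/(s_2-s_1+2N)$, so both preimages of the fixed points $s_1,s_2$ tend to $0$ as the rod $(s_1-N,s_2+N)$ lengthens. Since $\rho_I$ is continuous and $\rho_I(0,0)=0$ by its explicit formula, the right-hand side above tends to $0$, whence $d_M^{[g]}(x,y)=0$. As $s_1,s_2$ were arbitrary, $d_M^{[g]}$ vanishes identically along $\gamma$. The essential point --- and the only place completeness of $\gamma$ is used --- is that a complete null geodesic provides an arbitrarily ``long'' projectively parametrized measuring rod through any two of its points, which forces the Poincar\'e length of the corresponding one-step Kobayashi path to zero.
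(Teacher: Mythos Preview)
Your argument is correct and follows essentially the same route as the paper's own proof: both exploit the fact that, since $g$ is Einstein, affine parameters are projective, and then use completeness to dilate the affine parameter so that the preimages of two fixed points in $I$ collapse toward a single point, forcing the Poincar\'e distance to zero. The only cosmetic differences are that the paper first translates so that $x=\gamma(0)$ and uses the simple scalings $\gamma_i(s)=\gamma(ibs)$ (appealing directly to the definition of $d_M^{[g]}$ via one-step Kobayashi paths), whereas you use a general affine bijection onto $(s_1-N,s_2+N)$ and invoke Theorem~\ref{thm:1}(b); the underlying idea and estimate are identical.
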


\begin{proof}
To see this, assume that $s$ is an affine parameter along $\gamma$ and choose $b \in \RR$ such that $x:=\gamma(0)$ 
and $y:=\gamma(b)$ are distinct points in $M$.
Clearly for each $i \geq 2 \in \ZZ$, $\gamma_i(s) := \gamma(ibs)$ is an affine (hence, by the above remark, projective) reparametrization of $\gamma$ defined on the interval $I=(-1,1) \subset \RR$ with $\gamma_i(0)=x$ and $\gamma_i(1/i)=y$. Since the Poincar\'e distance $\rho(0,1/i) \to 0$ as $i \to \infty$, 
$\{0,1/i,\gamma_i\}_{i=2}^\infty$ is a sequence of 
Kobayashi paths joining $x$ to $y$ of arbitrarily small length. Thus $d_M^{[g]}(x,y) = 0$.

\end{proof}

\begin{proof}[Proof of the proposition.]
The first statement of the proposition is equivalent to Theorem \ref{thm:1}(e), but the proof 
is short enough to be repeated here. Assume that $(M,g)$ is a null geodescially complete Einstein 
manifold and let $x,y$ be any two distinct points in $M$. Choose a piecewise smooth null geodesic 
$\Gamma$ joining $x$ to $y$. The lemma
says that each of the finitely many smooth segments of $\Gamma$ has zero length, 
so by the triangle inequality,
$d_M^{[g]}(x,y) = 0$.

For the second statement, assume that $(M,g)$ is Einstein. If $g$ admits a single complete
null geodesic, the lemma shows that $d_M^{[g]}$ cannot be nondegenerate.

\end{proof}

\begin{proof}[Proof of the theorem.]
If the conformal metric $\tilde{g}$ satisfies the NCC and NCG,  $d_M^{[\tilde{g}]}$ is nondegenerate by Theorem \ref{thm:1}(f) above. But $d_M^{[g]} = d_M^{[\tilde{g}]}$, since $\tilde{g} \in [g]$. Applying the proposition, we see that the original Einstein metric $g$ cannot have even a single complete null geodesic. 
According to \cite{S}, null geodesic completeness is a property of the conformal class of $g$ when $M$ is 
compact, so $\tilde{g}$ must also be null geodescially incomplete.

\end{proof}


\vspace{12pt}

\end{document}